\newtheorem{thm}{Th\'eor\`eme}[section]
\newtheorem{cor}[thm]{Corollaire}
\newtheorem{lem}[thm]{Lemme}
\newtheorem{prop}[thm]{Proposition}
\theoremstyle{definition}\newtheorem{defn}[thm]{D\'efinition}
\theoremstyle{definition}
\theoremstyle{definition}
\newcommand{\R}{\mathbb R}
\newcommand{\C}{\mathbb C}
\newcommand{\Z}{\mathbb Z}
\newcommand{\N}{\mathbb N}
\newcommand{\K}{\mathcal K}
\newcommand{\op}{\operatorname}
\newcommand{\mois}{%
\ifcase\month\or
 Janvier\or F\'evrier\or Mars\or Avril\or Mai\or Juin\or
 Juillet\or Ao\^ut\or Septembre\or Octobre\or Novembre\or
D\'ecembre\fi
}
\title{Classification des feuilletages moyennables par surfaces} 
\author{Miguel Berm\'udez\\
\\
\small Institut de Math\'ematiques Jussieu\\
\small Universit\'e Paris 7\\
\small (France)}
\date{}
\begin{document}

\maketitle
\begin{abstract}
On d\'emontre dans ce papier que la caract\'eristique   
  d'Euler feuillet\'ee au sens de Connes classifie les feuilletages mesur\'es ergodiques par surfaces dont l'espace de feuilles est moyennable, modulo ceux de caract\'eristique d'Euler non n\'egative. Outre le cas trivial des surfaces compactes, ces derniers sont de deux types: les actions libres ergodiques de $\mathbb C$ et les fibr\'es en cercles au dessus d'un flot r\'eel ergodique. Gr\^ace \`a des travaux pr\'ec\'edents de Rudolph, Feldman, Ornstein, Weiss et autres, on sait qu'il n'y a qu'un seul feuilletage du premier type tandis que ceux de deuxi\`eme type ne sont malheureusement pas classifiables. On g\'en\'eralise ainsi le th\'eor\`eme classique de classification des surfaces compactes. 
\end{abstract}

\section{Introduction}

Le probl\`eme de la classification des feuilletages mesur\'es est central en th\'eorie ergodique. Par exemple, la th\'eorie des feuilletages mesur\'es de dimension un est \'equivalente \`a celle des automorphismes d'un espace de Lebesgue. On passe d'une transformation au feuilletage par la m\'ethode de suspension ou {\em mapping torus} et on passe du feuilletage \`a la transformation par restriction aux transversales. L'isomorphisme de feuilletages mesur\'es de dimension un n'est autre que la bien connue \'equivalence de Kakutani entre transformations. L'un des plus importants invariants dynamiques d'une transformation mesurable, l'entropie m\'etrique, est plus pr\'ecisement un invariant du feuilletage obtenu par suspension d'apr\`es la connue formule d'Abramov \cite{Abr}. On peut prouver, comme le remarque Feldman dans \cite{Fe}, que les feuilletages de dimension un ne sont pas compl\`etement classifiables par l'entropie. Plus encore, pour tout $\alpha$ positif il existe une quantit\'e non d\'enombrable de feuilletages (non isomorphes) d'entropie $\alpha$. Les travaux d'Ornstein-Rudolph-Weiss \cite{ORW} montrent n\'eanmoins que l'entropie classifie  une importante famille de feuilletages, connus sous le nom de LB ({\em loosely Bernoulli}). On sait d\'esormais qu'il n'existe pas d'invariants alg\'ebriques capables de classifier les feuilletages de dimension un \cite{FW}.

\bigskip
En dimension deux on retrouve des invariants et des ph\'enom\`enes nouveaux. Les feuilletages mesur\'es ergodiques de dimension deux \'etant des g\'en\'eralisations naturelles de la notion de surface compacte, on pourrait na\"ivement esp\'erer les classifier gr\^ace \`a la caract\'eristique d'Euler feuillet\'ee introduite par Connes dans \cite{Con1}. Ceci n'est malheureusement pas possible, car il existe par exemple des feuilletages mesur\'es orientables de dimension deux \`a caract\'eristique d'Euler $-1$ qui sont, aussi bien du point de vue de la dynamique transverse que de la topologie des feuilles, compl\`etement diff\'erents. 

\bigskip
On rappelle d'abord que d'apr\`es des r\'esultats de Connes, Hector et moi m\^eme on sait que dans le cas des feuilletages mesur\'es ergodiques orientables par surfaces $(X,\mu)$ on a: 
\begin{enumerate}
\item $\op{Eu}(X,\mu)>0$ si et seulement si $(X,\mu)\simeq (\mathbb
  S^2, \op{Compter})$ (\cite{Con1}).
\item $\op{Eu}(X,\mu)=0$ si et seulement si $(X,\mu)$ est d\'efinie
  par une action localement libre de $\mathbb C$ (\cite{Be1},\cite{BH}). 
\end{enumerate}
o\`u $\op{Eu}$ d\'esigne la caract\'eristique d'Euler feuillet\'ee et le symbol $\simeq$  l'isomorphisme de feuilletages m\'esur\'es (voir \S2.1). En particulier, les feuilletages \`a caract\'eristique d'Euler non n\'egative sont tous moyennables. Mais il existe aussi des feuilletages moyennables \`a caract\'eristique d'Euler n\'egative, et nous avons une mani\`ere naturelle d'en construire. Soit $(X,\mu)$ un feuilletage moyennable et $T$ une transversale mesurable. En greffant une anse en chaque point de $T$ on obtient un feuilletage que nous noterons $(X,\mu)^\#_T$ et dont la caract\'eristique d'Euler est \'egale \`a $\op{Eu}(X,\mu)-2\mu(T)$. On a ainsi diminu\'e la caract\'eristique d'Euler sans changer l'espace de feuilles et donc le caract\`ere moyennable. Nous prouvons dans ce papier que tous les feuilletages moyennables de dimension deux sont en fait obtenus par cette construction:

\newtheorem*{thmP}{\sc Th\'eor\`eme A}
\begin{thmP}\label{thm:A}{\em
  Pour tout feuilletage moyennable ergodique par surfaces $(X,\mu)$ de caract\'eristique d'Euler n\'egative il existe un feuilletage ergodique   $(X_{0},\mu_{0})$ \`a caract\'eristique d'Euler nulle tel que 
  $$
  (X,\mu)\simeq (X_{0},\mu_{0})_T^\#
  $$
  avec $\mu_{0}(T)=\frac{1}{2}\op{Eu}(X,\mu)$.}
 \end{thmP}

\bigskip
Un fameux th\'eor\`eme de Ghys \cite{Gh1} qui dit que la plupart des feuilles d'un feuilletage mesur\'e ergodique ont $0$, $1$, $2$ ou un nombre infini de bouts. Ce nombre est donc un invariant du feuilletage. Pour tout nombre r\'eel $\mathbf{e}\in \R$ et tout $\mathbf{b}\in\{0,1,2,\infty\}$ on note $\Phi(\mathbf{e},\mathbf{b})$ l'espace de modules de feuilletages mesur\'es ergodiques \`a caract\'eristique d'Euler $\mathbf e$ et nombre de bouts $\mathbf b$. On peut alors r\'esumer la discussion pr\'ec\'edente dans un tableau contenant tous les feuilletages mesur\'es orientables de dimension deux:
\begin{center}
  \begin{tabular}[t]{|c||c|c|c|c|}
    \hline \, & $\mathbf e>0$ & $\mathbf e=0$ & \multicolumn{2}{c|}{$\mathbf e<0$} \\  
    \hline\hline $\mathbf b =0$ & $\Sigma^{0}$ & $\Sigma^{1}$ & \multicolumn{2}{c|}{$\Sigma^{g}=\Sigma^{1}\#\dots\# \Sigma^{1}\quad(g\geq 2)$}\\
    \hline $\mathbf b =1$ & $\varnothing$ &  $\Phi(\mathbb C)$ & $\quad \Phi(\mathbb C)^\#_T\quad $&
    $\mathbf{N.M.}$\\
    \hline $\mathbf b =2$ & $\varnothing$ & $\Phi(\R)\times \R/\Z$ &
    \multicolumn{2}{c|}{$\big(\Phi(\R)\times \R/\Z\big)^\#_T$}\\ 
    \hline $\mathbf b =\infty$ & $\varnothing$ & $\varnothing$ &
    \multicolumn{2}{c|}{$\mathbf{N.M.}$}\\ 
    \hline
  \end{tabular}
\end{center}
o\`u $\Sigma^{g}$ d\'esigne la surface compacte orientable de genre $g\in \N$ et $\Phi(\mathbb K)$ l'espace de modules de feuilletages mesur\'es ergodiques d\'efinis par une action {\bf libre} de $\mathbb K$ pour $\mathbb K=\R$ ou $\C$.  Le lecteur l'aura compris, nous avons un tableau contenant tous les feuilletages (orientables ou non) qui n'est autre qu'un ``rev\^etement \`a deux feuillets'' de celui-ci. Les feuilletages non moyennables ($\mathbf{N.M.}$) ne peuvent appartenir qu'\`a deux des neuf cases d'apr\`es les r\'esultats d'Adams \cite{Ada} et Gaboriau \cite{Gab1}. On peut facilement construire des exemples dans chaque case. Les suspensions des actions libres des groupes de surfaces de genre $\geq 2$ fournissent des feuilletages non moyennables de type $\Phi(\mathbf e,1)$ pour $\mathbf e<0$. Des feuilletages de type $\Phi(\mathbf e,\infty)$ sont obtenus par un proc\'ed\'e de grossissement des actions des groupes
discrets ayant un Cantor de bouts.

\medskip
Deux questions naturelles se posent:
\begin{enumerate}
\item Quel est l'espace de modules des feuilletages moyennables plats, i.e. \`a caract\'eristique d'Euler nulle;
\item Combien de feuilletages diff\'erents peut-on construire par chirurgie \`a partir d'un feuilletage plat donn\'e.
\end{enumerate}
Dans le cas o\`u la feuille g\'en\'erique a un seul bout, le th\'eor\`eme suivant r\'epond \`a ces deux questions.

\newtheorem*{thmB}{\sc Th\'eor\`eme B}
\begin{thmB}\label{thm:B}{\em
Tous les feuilletages mesur\'es moyennables par plans sont isomorphes. Si $(X_{0},\mu_{0})$ est un feuilletage de ce type, alors les deux conditions suivantes sont \'equivalentes:
  \begin{enumerate}
  \item $(X_{0},\mu_{0})^\#_T\simeq (X_{0},\mu_{0})^\#_S$;
  \item $\mu_{0}(T)=\mu_{0}(S)$.
  \end{enumerate}
}\end{thmB}

Pour finir, quelques mots concernant les feuilletages de type $\Phi(0,2)$, i.e. ceux dont la feuille g\'en\'erique est un cylindre. Ils sont isomorphes au produit d'un flot r\'eel et d'un cercle. Le flot r\'eel en question est unique \`a \'equivalence de Kakutani pr\`es, ce qui fait que son entropie est un invariant du feuilletage. Par cons\'equent, m\^eme si les feuilletages de ce type ne sont pas compl\'etement classifiables, l'entropie m\'etrique du flot sous-jacent fournit une partition tr\`es satisfaisante de l'ensemble de classes d'isomorphisme. La derni\`ere section de ce papier est consacr\'ee \`a l'\'etude des feuilletages \`a deux bouts.

\bigskip
Nous avons fait donc ainsi le tour de tous les feuilletages mesur\'es moyennables dont les feuilles sont des surfaces.

\section{Quelques pr\'eliminaires}

\subsection{Feuilletages mesur\'es}
Un {\em feuilletage bor\'elien} est un triplet $(X,\mathcal B,\mathcal F)$ form\'e par un ensemble $X$ muni d'une structure de Borel standard $\mathcal B$ et d'une structure de vari\'et\'e topologique  $\mathcal F$. On {\bf ne suppose pas} que la structure bor\'elienne est celle engendr\'ee par la topologie. Les composantes connexes de $\mathcal F$ sont appel\'ees {\em feuilles}. On supposera que les feuilles sont des vari\'et\'es {\bf s\'epar\'es \`a base d\'enombrable}. Les morphismes entre feuilletages bor\'eliens sont les applications qui sont simultan\'ement bor\'eliennes et continues, que nous appellons des applications BT. En particulier, les applications BT envoient feuille sur feuille. Deux feuilletages bor\'eliens sont dits isomorphes s'il existe une bijection BT entre eux dont l'inverse est BT.

\medskip
L'exemple le plus simple de feuilletage bor\'elien est le produit $V\times T$ d'une vari\'et\'e connexe $V$ et d'un espace bor\'elien standard $T$. Un tel feuilletage est appel\'e {\em prisme} de {\em base} $V$ et {\em verticale} $T$. Les feuilles de $V\times T$ sont les sous-vari\'et\'es horizontales  $V\times t$ ($t\in T$), lesquels seront appel\'es les {\em plaques} du prisme. 

\medskip
On appelle atlas de $(X,\mathcal B,\mathcal F)$ une famille d\'enombrable de bor\'eliens ferm\'es $U_i$ isomorphes \`a des prismes $\mathbb D^n\times T_i$, o\`u $\mathbb D^n$ est un disque ferm\'e de dimension $n$. On supposera d\'esormais que tous les feuilletages bor\'eliens poss\`edent un atlas. 

\medskip
On appelle transversale de $(X,\mathcal B,\mathcal F)$ un sous-ensemble $T\subset X$ qui est bor\'elien, ferm\'e et discret, autrement dit, un bor\'elien $T\in \mathcal B$ qui rencontre les feuilles de $\mathcal F$ le long de sous-espaces ferm\'es discrets. Puisque les feuilles sont suppos\'ees s\'eparables, l'intersection d'une transversale et d'une feuille est un ensemble d\'e\-nombrable. La relation d'\'equivalence {\em appartenir \`a la m\^eme feuille} induit donc une relation d'\'equivalence bor\'elienne \`a classes d\'enombrables sur $T$ au sens de \cite{FM}. Deux transversales $T$ et $S$ seront dites {\em \'equivalentes} (et on note $T\sim S$) s'il existe un isomorphisme bor\'elien $\gamma:S\to T$ qui pr\'eserve les feuilles, i.e. s'il pr\'eserve la relation d'\'equivalence induite sur $T$ par le feuilletage. Une {\em mesure  transverse} est une application $\sigma$-additive qui assigne \`a toute transversale $T$ un nombre $\mu(T)\in [0,+\infty]$. Une mesure transverse est dite {\em invariante} si elle prend la m\^eme valeur sur des transversales \'equivalentes.

\medskip
Pour simplifier, on omettra d\'esormais dans la notation, si possible, toute r\'ef\'erence explicite aux structures bor\'elienne $\mathcal B$ et topologique $\mathcal F$, et on parlera tout simplement du feuilletage $X$, des bor\'eliens de $X$ et des feuilles de $X$. On parlera de $\mathcal F$ et $\mathcal B$ comme des structures sous-jacentes si jamais on en a besoin.

\begin{defn}\label{FMF}
On appelle {\em feuilletage mesur\'ee} un couple $(X,\mu)$ form\'e par un feuilletage bor\'elien $X$ muni d'une mesure transverse invariante $\mu$. Un tel feuilletage est dit {\em fini} si la condition suivante est v\'erifi\'ee:
\begin{description}
\item[\bf (MF)] Il existe un atlas {\bf fini} de $X$ form\'e par des prismes $U_i\simeq \mathbb D^n\times T_i$ dont la verticale $T_i$ est de mesure {\bf finie} non nulle $0 < \mu(T_i)<+\infty$
\end{description} 
\end{defn}

\paragraph{Morphismes de feuilletages mesur\'es.} Sur un feuilletage mesur\'e $(X,\mu)$ nous avons une notion d'{\em partie n\'egligeable}. Une partie $B\subset X$ est dite n\'egligeable si toute transversale de $X$ contenue dans $B$ est de mesure nulle. Un sous-ensemble d'une partie n\'egligeable est bien sur n\'egligeable et il est facile \`a montrer qu'une partie de $X$ est n\'egligeable si et seulement si son satur\'e, i.e. la r\'eunion des feuilles de $X$ qui la rencontrent, est aussi n\'egligeable. Les parties n\'egligeables qui comptent sont donc les satur\'ees. On appelera {\em partie totale} de $X$ le complementaire d'une partie n\'egliegable satur\'ee. Il est raisonnable, d'un point de vue de la mesure, de ne pas distinguer un feuilletage mesur\'e de ses parties totales.

\begin{defn}\label{ISO}
Un morphisme de feuilletages mesur\'es $(X_{0},\mu_{0})$ et $(X_{1},\mu_{1})$ est donn\'e par un morphisme de feuilletages bor\'eliens entre deux parties totales $\widehat X_{0}\subset X_{0}$ et $\widehat X_{1}\subset X_{1}$. Deux feuilletages mesur\'ees  seront dits {\em isomorphes} s'il existe des parties totales qui sont isomorphes entant que feuilletages bor\'eliens. 
\end{defn}

\paragraph{Note.} On se restreint dans ce papier au cas des feuilletages de dimension deux et on suppose pour simplifier que les feuilles sont mesurablement munies d'une structure diff\'erentiable, dans le sens o\`u il existe un atlas (pas n\'ecessairement fini) avec des cartes telles que les changement de plaques sont des diff\'eomorphismes locaux. Ceci n'est pas vraiment une restriction car on peut  d\'emontrer que tous les feuilletages par surfaces poss\`edent une structure diff\'erentiable dans ce sens. Les isomorphismes de feuilletages seront suppos\'es des diff\'eomorphismes le long des feuilles, \`a moins de sp\'ecifier le contraire.

\subsection{Feuilletages par surfaces}
Soit $X$ un feuilletage bor\'elien de dimension deux.

\medskip
Une {\em triangulation} de $X$ est une famille mesurable de triangulations des feuilles au sens de \cite{BH}. Ceci signifie que l'ensemble des triangles est un espace de Borel standard $\K$ et qu'il existe une application bor\'elienne continue $\pi:\Delta^2\times\K\to X$ telle que $\pi:\Delta^2\times \{\sigma\}\to \sigma$ est un isomorphisme simplicial pour tout triangle $\sigma\in \K$. Tout feuilletage par surfaces poss\`ede une triangulation (voir \cite{BH}). On supposera d\'esormais que les feuilletages consid\'er\'es sont munis d'une triangulation $\K$.

\bigskip
Etant donn\'ee une surface $L$, on appelle {\em domaine} de $L$ toute surface compacte \`a bord contenue dans $L$. Un domaine $\Omega$ dans une feuille de $X$ sera dit {\em simplicial} s'il est r\'eunion de triangles de $\K$. 

\bigskip
Un bor\'elien $B$ de $X$ est dit {\em $\phi$-compact} si toutes ses feuilles sont des domaines simpliciaux. On dira que $B$ est une {\em pile simpliciale} s'il existe une surface triangul\'ee connexe $\Omega$ et un isomorphisme bor\'elien simplicial $\pi:\Omega\times T\to B$. Plus g\'en\'eralement un dira qu'une paire $A\subset B$ de bor\'eliens $\phi$-compacts est une {\em pile simplicial} s'il existe une surface triangul\'ee connexe compacte $\Omega$, un domaine simplicial pas forcement connexe $\Omega'\subset \Omega$ et un isomorphisme bor\'elien simplicial de paires $(\Omega,\Omega')\times T\to (B,A)$.

\bigskip
La preuve du lemme et la proposition suivants peut \^etre trouv\'ee dans \cite{BH}. 

\begin{lem}\label{lem:part-simp}
  Soit $A\subset B$ une paire de bor\'eliens $\phi$-compacts. Alors il existe une partition d\'enombrable de $B$ en bor\'eliens satur\'es $B_i$ telle que la paire $(B_i,A\cap B_i)$ est une pile simpliciale.
\end{lem}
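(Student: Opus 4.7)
Mon plan est de partitionner $B$ suivant le type combinatoire de chaque feuille, puis de trivialiser chaque morceau à l'aide d'une sélection borélienne. Puisque $B$ est $\phi$-compact, chaque feuille $L\cap B$ est un domaine simplicial compact, donc réunion finie de triangles de $\K$, et $L\cap A$ en est un sous-complexe simplicial fini. Le couple $(L\cap B,L\cap A)$ appartient donc à un ensemble dénombrable $\mathcal{T}$ de classes d'isomorphisme simplicial de paires $(\Omega,\Omega')$ de surfaces triangulées compactes.

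Pour chaque $\tau=(\Omega_\tau,\Omega_\tau')\in\mathcal{T}$, je poserai
$$
B_\tau=\{x\in B \,:\, (L_x\cap B,L_x\cap A) \text{ est de type combinatoire } \tau\},
$$
où $L_x$ désigne la feuille de $X$ contenant $x$. Les conditions qui définissent l'appartenance à $B_\tau$ (nombre de triangles de la feuille, relations d'adjacence entre eux, appartenance à $A$) se lisent boréliennement via l'application $\pi:\Delta^2\times\K\to X$ de la triangulation; $B_\tau$ est donc borélien, et manifestement saturé. La famille $(B_\tau)_{\tau\in\mathcal T}$ fournit alors la partition dénombrable cherchée de $B$.

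Il restera à voir que chaque $(B_\tau,A\cap B_\tau)$ est effectivement une pile simpliciale au sens de la définition donnée plus haut. Pour chaque feuille $L\subset B_\tau$ il existe un nombre fini non nul d'isomorphismes simpliciaux $\Omega_\tau\to L\cap B$ envoyant $\Omega_\tau'$ sur $L\cap A$. Je fixerai un bon ordre borélien sur $\K$ ainsi qu'un triangle distingué $\sigma_0\in\Omega_\tau$, puis, feuille par feuille, je retiendrai l'unique isomorphisme dont l'image de $\sigma_0$ est le triangle minimal, pour l'ordre choisi, parmi les images possibles. L'ensemble $T_\tau$ de ces triangles distingués constitue une transversale borélienne, et la trivialisation cherchée $(\Omega_\tau,\Omega_\tau')\times T_\tau\to(B_\tau,A\cap B_\tau)$ s'en déduit en envoyant $(\omega,t)$ sur l'image de $\omega$ par l'isomorphisme attaché à $t$.

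Le principal obstacle technique me paraît être précisément cette dernière étape: garantir que la sélection d'un représentant canonique parmi un nombre fini de candidats, feuille par feuille, se fait effectivement boréliennement. Cela devrait se ramener à un argument de sélection borélienne à fibres finies, du type Lusin-Novikov, appliqué au graphe borélien des isomorphismes simpliciaux admissibles entre $\Omega_\tau$ et les feuilles de $B_\tau$; tout le reste (vérification que les $B_\tau$ sont boréliens et saturés, construction explicite de la trivialisation à partir de $T_\tau$) devrait être de routine une fois cette sélection en main.
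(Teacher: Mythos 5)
Le papier ne démontre pas ce lemme~: il renvoie à \cite{BH} pour la preuve, de sorte qu'une comparaison directe n'est pas possible. Votre stratégie --- partition de $B$ selon le type combinatoire de la paire (feuille, trace de $A$), puis trivialisation de chaque morceau par une sélection borélienne à fibres finies --- est bien la démarche naturelle et elle aboutit, modulo deux rectifications. D'abord, le type combinatoire doit être attribué à la \emph{feuille de $B$} passant par $x$, c'est-à-dire à la composante connexe de $x$ dans $L_x\cap B$, et non à la trace $L_x\cap B$ tout entière~: une feuille de $X$ peut rencontrer $B$ suivant plusieurs (voire une infinité de) composantes, auquel cas $L_x\cap B$ n'est ni connexe ni nécessairement compacte et son «~type~» n'est pas défini, et la base $\Omega_\tau$ d'une pile doit de toute façon être connexe~; la saturation des $B_\tau$ s'entend alors par rapport aux feuilles de $B$ (ce qui est bien le sens du lemme, $B$ ne pouvant contenir de feuille non compacte de $X$). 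Ensuite, la minimalité de l'image du triangle distingué $\sigma_0$ ne détermine pas un isomorphisme unique~: plusieurs isomorphismes simpliciaux de $(\Omega_\tau,\Omega'_\tau)$ sur la paire (feuille, trace de $A$) peuvent envoyer $\sigma_0$ sur le même triangle, différant par un automorphisme de la paire qui préserve $\sigma_0$. Il faut donc sélectionner parmi les isomorphismes eux-mêmes, par exemple en codant chacun par la liste finie des images des sommets (ou des triangles) de $\Omega_\tau$ et en prenant le minimum lexicographique pour un ordre borélien sur $\K$~; comme chaque feuille n'admet qu'un nombre fini de candidats, ce minimum dépend boréliennement de la feuille, et c'est précisément l'uniformisation à sections finies (Lusin--Novikov) que vous invoquez. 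Avec ces deux corrections, et la vérification de routine que la composante feuilletée de $x$ dans $B$ s'énumère boréliennement via l'application $\pi:\Delta^2\times\K\to X$, votre démonstration est complète et correspond pour l'essentiel à l'argument standard auquel renvoie \cite{BH}.
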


\bigskip
Le feuilletage $X$ est dit {\em hypercompact} s'il poss\`ede une {\em filtration $\phi$-compacte}, i.e. une suite $\mathcal B=\{B_{k}|k\in \N\}$ de bor\'eliens de $X$ telle que pour chaque $k\in \N$:
\begin{enumerate}
\item $B_{k}$ est $\phi$-compact;
\item $B_{k}$ est contenu dans $B_{k+1}$;
\end{enumerate}
On dira qu'un feuilletage mesur\'ee $(X,\mathcal F,\mu)$ est {\em hypercompact} s'il admet un sous-feuilletage bor\'elien hypercompact de mesure totale. 

\bigskip
Nous avons:

\begin{prop}[\cite{BH}] 
  Un feuilletage mesur\'e est moyennable si et seulement s'il est hypercompact.
\end{prop}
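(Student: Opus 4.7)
\medskip
\noindent\textbf{Plan de preuve.} La strat\'egie naturelle est d'\'etablir les deux implications en utilisant le th\'eor\`eme de Connes-Feldman-Weiss comme dictionnaire: la moyennabilit\'e de $(X,\mu)$ \'equivaut \`a l'hyperfinitude de la relation d'\'equivalence induite sur toute transversale compl\`ete $T$. Il s'agit donc de traduire une filtration $\phi$-compacte de $X$ en suite croissante de relations \`a classes finies sur $T$, et r\'eciproquement de relever une telle suite en bor\'eliens $\phi$-compacts gr\^ace \`a la triangulation $\K$ dont on a suppos\'e l'existence sur tout feuilletage par surfaces.

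\medskip
\noindent\textit{Sens $(\Leftarrow)$.} Soit $\{B_{k}\}$ une filtration $\phi$-compacte d'une partie totale, et $T$ une transversale compl\`ete. Chaque feuille de $B_{k}$ \'etant un domaine simplicial compact, elle rencontre $T$ le long d'un ensemble fini. La relation \emph{appartenir \`a la m\^eme feuille de $B_{k}$} fournit donc une relation bor\'elienne $R_{k}$ \`a classes finies sur $T\cap B_{k}$; les inclusions $B_{k}\subset B_{k+1}$ entra\^\i nent $R_{k}\subset R_{k+1}$, et la r\'eunion $\bigcup R_{k}$ co\"\i ncide avec la relation transverse de $X$ sur le bor\'elien total $T\cap \bigcup B_{k}$. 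Celle-ci est donc hyperfinie, et $(X,\mu)$ est moyennable par Connes-Feldman-Weiss.

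\medskip
\noindent\textit{Sens $(\Rightarrow)$.} Si $(X,\mu)$ est moyennable, Connes-Feldman-Weiss fournit sur une transversale compl\`ete $T$ une suite croissante $R_{k}$ de relations \`a classes finies dont la r\'eunion est la relation transverse de $X$. Pour chaque classe $C\in T/R_{k}$, contenue dans une feuille $L$, on construit un domaine simplicial compact connexe $\Omega_{C}\subset L$ contenant $C$, en joignant les points de $C$ par des chemins simpliciaux minimaux dans $L$ puis en \'epaississant le graphe obtenu en un voisinage r\'egulier simplicial. On pose $B_{k}=\bigsqcup_{C}\Omega_{C}$. Le lemme \ref{lem:part-simp}, appliqu\'e \`a la paire $(B_{k},B_{k-1})$, permet, apr\`es partition mesurable en piles simpliciales, d'effectuer ces choix de mani\`ere bor\'elienne et d'assurer $\Omega_{C}\subset \Omega_{C'}$ d\`es que $C\subset C'$, donc $B_{k}\subset B_{k+1}$.

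\medskip
\noindent\textit{Principale difficult\'e.} Le point vraiment d\'elicat est l'exhaustion: il faut que $\bigcup B_{k}$ soit une partie totale de $X$, i.e. recouvre presque toute feuille. Ceci exige que $\Omega_{C}$ cro\^\i sse non seulement en nombre de points de $T$ contenus, mais aussi en nombre de triangles de $\K$ recouverts. Pour y parvenir on modifie conjointement les $R_{k}$ et les $\Omega$ de sorte que $\Omega_{[t]_{R_{k}}}$ contienne toujours la boule combinatoire de rayon $k$ autour de $t$ dans la feuille, ce qui reste compatible avec l'hyperfinitude puisque la triangulation est localement finie le long des feuilles. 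C'est cette articulation entre hyperfinitude transverse et g\'eom\'etrie simpliciale longitudinale qui constitue le c\oe ur technique du r\'esultat, et qui justifie que tout le travail pr\'eparatoire sur les piles simpliciales ait \'et\'e men\'e en amont.
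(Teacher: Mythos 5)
Notez d'abord que le papier ne d\'emontre pas cette proposition: elle est explicitement import\'ee de \cite{BH} (``La preuve du lemme et la proposition suivants peut \^etre trouv\'ee dans \cite{BH}''), donc votre tentative ne peut \^etre compar\'ee qu'\`a la faisabilit\'e g\'en\'erale de l'argument. Votre sens $(\Leftarrow)$ est essentiellement correct (hyperfini $\Rightarrow$ moyennable est d'ailleurs le sens facile, qui ne requiert pas Connes--Feldman--Weiss); il manque seulement la petite v\'erification que deux points de $T$ sur une m\^eme feuille tombent bien, pour $k$ grand, dans la \emph{m\^eme composante connexe} de $B_{k}$ --- ce qui se r\`egle en remarquant qu'un chemin simplicial compact les joignant est form\'e d'un nombre fini de triangles de $\K$, chacun \'etant enti\`erement contenu dans $B_{k}$ d\`es qu'il en rencontre l'int\'erieur, pourvu que la filtration soit exhaustive sur la feuille.

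Le sens $(\Rightarrow)$ contient en revanche une lacune r\'eelle, que vous identifiez vous-m\^eme sans la r\'esoudre. Si vous posez $B_{k}=\bigcup_{C}\Omega_{C}$, rien ne garantit que les composantes connexes de $B_{k}$ soient compactes: des domaines $\Omega_{C}$ et $\Omega_{C'}$ associ\'es \`a des classes distinctes peuvent se rencontrer et se cha\^\i ner le long d'une feuille non compacte, de sorte que $B_{k}$ n'est plus $\phi$-compact. Votre correctif (exiger que $\Omega_{[t]_{R_{k}}}$ contienne la boule combinatoire de rayon $k$) aggrave pr\'ecis\'ement ce ph\'enom\`ene: d\`es que deux points de $T$ sur une m\^eme feuille sont \`a distance combinatoire $\leq 2k$ sans \^etre $R_{k}$-\'equivalents, leurs domaines se recouvrent, et la composante de $B_{k}$ peut devenir toute la feuille; si au contraire on fusionne les classes pour l'\'eviter, il faut montrer qu'elles restent finies, ce qui n'est pas automatique et constitue justement le contenu du th\'eor\`eme. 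La difficult\'e centrale est donc de choisir \emph{simultan\'ement} les relations finies $R_{k}$ et des domaines deux \`a deux disjoints (une sorte de pavage mesurable des feuilles subordonn\'e aux classes, \`a la Rokhlin/F\o lner, comme dans \cite{BH} ou dans la machinerie de Connes--Feldman--Weiss et Ornstein--Weiss), de fa\c{c}on que les composantes de $B_{k}$ soient exactement les $\Omega_{C}$ (donc compactes), que la suite soit croissante, et qu'elle \'epuise presque toute feuille. Dire que cela ``reste compatible avec l'hyperfinitude puisque la triangulation est localement finie'' n'est pas un argument: c'est l'\'enonc\'e \`a d\'emontrer, et en l'\'etat votre preuve du sens direct n'en fournit pas.
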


\subsection{Chirurgie sur les feuilletages}
Soit $X$ un feuilletage bor\'elien \`a bord et consid\'erons $Y\subset \partial X$ une partie bor\'elienne satur\'ee du bord de $X$. Si $\phi:Y\to Y$ est un automorphisme de feuilletages bor\'eliens, on peut recoller $X$ avec lui m\^eme le long de cet isomorphisme pour obtenir un nouveau feuilletage \`a bord que nous noterons $X_{\phi}$, et dont le bord coincide avec le complementaire de $Y$ dans $\partial X$. En particulier, si $Y=\partial X$ le feuilletage obtenu n'a pas de bord.

\medskip
La construction reciproque est la suivante. Soit $Y\subset X$ un bor\'elien ferm\'e de $X$ isomorphe \`a un prisme de cercles. En d\'ecoupant $X$ le long de $Y$ on obtient un feuilletage \`a bord $X^{Y}$ dont le bord est isomorphe \`a l'union disjointe de deux copies $Y$ et d'une copie de $\partial X$. Les deux copies de $Y$ sont naturellement reli\'ees par un isomorphisme $\psi$, de sorte que le feuilletage original $X$ est obtenu \`a partir de $X^{Y}$ par la construction pr\'ec\'edente. On a donc
$$
X=X^{Y}_{\psi}
$$

\medskip
Soit $X$ un feuilletage bor\'elien sans bord et $T$ une transversale. En r\'etirant un petit disque autour de chaque point de $T$ on obtient un feuilletage bor\'elien \`a bord que l'on note $X_{T}$. Le bord de ce feuilletage est naturellement isomorphe au prisme de cercles $\R/\Z\times T$. 

\bigskip
La somme connexe de deux feuilletages $X_{0}$ et $X_{1}$ est obtenu de la fa\c{c}on suivante. On se fixe un isomorphisme bor\'elien $\varphi:T_{0}\to T_{1}$ entre deux transversales $T_{0}\subset X_{0}$ et $T_{1}\subset X_{1}$. Ceci induit un isomorphisme entre les bords de $X_{0,T_{0}}$ et $X_{1,T_{1}}$ qui nous permet de les recoller pour obtenir un feuilletage sans bord que l'on note
$$
X_{0}\;\underset{\varphi}{\#}\;X_{1}
$$
et qu'on appelle la {\em somme connexe (via $\varphi$)} des feuilletages $X_{0}$ et $X_{1}$. Le type d'isomor\-phisme des feuilletages obtenus d\'epend non seulement des transversales choisies, mais aussi de l'isormorphisme $\varphi$. Si les feuilletages d'origine avaient du bord, alors on r\'ealise une construction analogue si l'on suppose que les transversales $T_{0}$ et $T_{1}$ sont int\'erieures (i.e. ne rencontrent pas le bord). Le bord du feuilletage obtenu par somme connexe est alors la r\'eunion des deux bords.

\bigskip
Dans le cas o\`u le deuxi\`eme feuilletage est isomorphe \`a un prisme de tores $\Sigma^{2}\times T$ muni de la transversale $\{*\}\times T$, alors le feuilletage obtenu par somme connexe d\'epend de la transversale chosie mais est independant de l'isomorphisme $\varphi$. Par con\'equent il sera not\'e tout simplement $X^{\#}_{T}$. Un tel feuilletage sera dit obtenu par {\em greffe d'anses} le long de $T$.

\paragraph{En pr\'esence d'une mesure transverse.} Les constructions ci-dessus peuvent \^etre r\'ealis\'ees sur les feuilletages mesur\'es (i.e. munis d'une mesure transverse invariante), la seule condition \'etant que les isomorphismes utilis\'es pour recoller les feuilletages pr\'eservent la mesure. En particulier les deux transversales choisies dans la deuxi\`eme construction doivent avoir la m\^eme mesure. Dans le cas des feuilletages {\bf (MF)}, les deux transversales doivent \^etre de mesure finie afin d'obtenir un feuilletage {\bf (MF)}.

\section{Preuve des th\'eor\`emes}

\subsection{Th\'eor\`eme A}
La difficult\'e dans la preuve de ce th\'eor\`eme consiste \`a enlever les anses des feuilles de fa\c con mesurable. Ceci peut \^etre fait facilement dans le cas \`a $0$ bouts, i.e. quand les feuilles sont compactes. En effet, par le lemme \ref{lem:part-simp} un tel feuilletage admet un d\'ecoupage en prismes du type $\Omega\times T$, o\`u $\Omega$ est une surface \`a bord. Il est facile \`a voir que $\Omega=(\Omega_{0})_{U}^{\#}$ o\`u $\Omega_{0}$ est une surface planaire et $U$ un ensemble fini de points de $\Omega_{0}$. Par cons\'equent
$$
\Omega\times T\simeq (\Omega_{0}\times T)_{U\times T}^{\#}.
$$

\medskip
Consid\'erons maintenant le cas d'un feuilletage \`a feuilles non compactes muni d'une filtration $\phi$-compacte $\mathcal B=\{B_{k}\}_{k\in\N}$. Le bord de $\mathcal B$, i.e. le bor\'elien
$$
\partial\mathcal B :=\bigcup_{k}\partial B_{k}
$$ 
est isomorphe \`a une pile de cercles $\R/\Z\times T$. Ce cercles divisent les feuilles du feuilletage en domaines compacts; autrement dit, le feuilletage obtenu en d\'ecoupant $X$ le long de $\partial\mathcal B$, que nous notons
$$
\widehat X:=X^{\partial\mathcal B}
$$
est un feuilletage \`a feuilles compactes. On a d\'ej\`a vu qu'un tel feuilletage peut \^etre obtenu par greffe d'anses sur un feuilletage \`a feuilles planaires que l'on note $\widehat X_{0}$. Le bord de ce feuilletage co\"incidant avec celui de $\widehat X$, le processus de recollement le long du bord qui permet d'ontenir $X$ \`a partir de $\widehat X$ peut \^etre appliqu\'e \`a $\widehat X_{0}$ pour obtenir un feuilletage que nous noterons $X_{0}$.

\medskip
Malheureusement ce feuilletage n'est pas en g\'eneral planaire, car le recollement des cercles du bord entre eux peut donner lieu \`a des anses. On peut assurer que $X_{0}$ est un feuilletage planaire dans le cas o\`u la filtration $\mathcal B$ est simple. Une filtration $\phi$-compacte de $X$ est dite {\em simple} si le bord de chacune des feuilles de $\mathcal B$ a autant de composantes connexes que la feuille qui la contient a de bouts. 

\medskip
\noindent {\bf Assertion 1}: {\em Si la filtration $\phi$-compacte $\mathcal B$ est simple, alors le feuilletage $X_{0}$ construit ci-dessus est planaire. En particulier sa caract\'eristique d'Euler feuillet\'ee est nulle}.

\medskip
La demostration de cette assertion est tr\`es simple. Pour fixer les id\'ees, supposons que les feuilles de $X$ ont un bout, le cas \`a deux bouts \'etant analogue. Dans ce cas toutes les plaques de $\mathcal B$ sont des surfaces dont le bord est connexe (i.e. un cercle) et sont donc hom\'eomorphes \`a des disques auxquels on a greff\'e des anses. La filtration $\mathcal B$ induit de fa\c{c}on \'evidente une filtration $\mathcal B_{0}$ du feuilletage $X_{0}$. Les plaques de $\mathcal B_{0}$ sont obtenues en recollant le long du bord un nombre fini de feuilles de $\widehat X_{0}$, qui sont par construction des surfaces planaires. De plus, chaque cercle du bord de $\mathcal B_{0}$ est homologue \`a z\'ero dans la feuille qui le contient. Le proc\'ed\'e d\'ecrit ne produit don pas d'anses car pour ce faire il faudrait que'il existe un cercle du bord de $\mathcal B_{0}$ qui devienne non trivial dans l'homologie du recollement. 

\medskip
Le reste de la d\'emonstration passe par l'assertion suivante.

\medskip
\noindent {\bf Assertion 2}: {\em Tout feuilletage hypercompact poss\`ede une filtration $\phi$-compacte simple}.

\medskip
Comme ci-dessus, on va supposer que les feuilles de $X$ ont un bout. Le probl\`eme est de construire une filtration $\phi$-compacte dont les feuilles soient \`a bord connexe. 

\medskip
Soit $\Omega$ une surface compacte simpliciale \`a bord et soit $\Gamma\subset \Omega$ une sous-vari\'et\'e simpliciale de dimension un. On dira que $\Gamma$ {\em r\'eduit} $\Omega$ si le d\'ecoupage de $\Omega$ le long de $\Gamma$ produit une surface \`a bord connexe, i.e. un disque auquel on a greff\'e des anses. La seule posibilit\'e est que les composantes connexes de $\Gamma$ soient des segments qui rejoignent des composantes connexes distinctes du bord de $\Omega$. Si $\Gamma$ r\'eduit $\Omega$ alors en r\'etirant un petit voisinage ouvert de $\Gamma$ dans $\Omega$ on obtient une sous-surface de $\Omega$ dont le bord est connexe.

\medskip
\noindent {\bf Assertion 3}: {\em Si $L$ est une surface \`a un bout et $\Omega_{n}$ est une filtration $\phi$-compacte de $L$, alors il existe une sous-vari\'et\'e simpliciale de dimension un $\Gamma\subset L$ \`a feuilles compactes qui r\'eduit $\Omega_{n}$ pour tout $n$.}

\medskip
On d\'efinit une suite $\Gamma_{n}$ de sous-vari\'et\'es compacts de dimension un (autrement dit des r\'eunions d'un nombre fini de cercles deux \`a deux disjoints) r\'eduisant simultan\'ement les surfaces compactes $\Omega_{1}\subset \cdots\subset \Omega_{n}$ de la fa\c{c}on suivante:
\begin{enumerate}
\item $\Gamma_{0}$ est la plus petite (pour le volume simplicial) sous-vari\'et\'e r\'eduisant $\Omega_{0}$. 

\item $\widehat\Gamma_{n+1}$ est une sous-vari\'et\'e v\'erifiant les conditions suivantes:
\begin{enumerate}
\item $\widehat\Gamma_{n+1}$ r\'eduit $\Omega_{n+1}$;
\item si une feuille de $\Gamma_{n}$ rencontre $\widehat\Gamma_{n+1}$ alors elle y est enti\`erement contenue;
\item $\widehat\Gamma_{n+1}$ minimise le volume simplicial de l'intersection $\widehat\Gamma_{n+1}\cap\Gamma_{n}$ parmi celles qui v\'erifient les deux conditions pr\'ec\'edentes.
\end{enumerate}

\item Enfin on pose $\Gamma_{n+1}=\widehat\Gamma_{n+1}\cup\Gamma_{n}$.
\end{enumerate}
La sous-vari\'et\'e $\Gamma=\cup_{n}\Gamma_{n}$ v\'erifie les conditions requises par l'assertion 3. En effet, elle est simpliciale et r\'eduit tous les $\Omega_{n}$ par construction. Ses feuilles sont compactes car pour tout $n\in \N$ il existe $m>n$ tel que $\Omega_{n}$ est contenu dans l'interieur d'une sous-surface de $\Omega_{m}$ dont le bord est connexe. Il est donc possible de r\'eduire $\Omega_{m}$ sans rencontrer $\Gamma_{n}$ ni la r\'eunion des feuilles de $\Gamma_{n+1},\dots,\Gamma_{m-1}$ rencontrant celles de $\Gamma_{n}$. La condition (c) implique alors que la sous-vari\'et\'e $\widehat\Gamma_{m}$ ne rencontre pas $\Gamma_{n}$. 

\medskip
Puisque $\Gamma$ r\'eduit tous les $\Omega_{n}$, en r\'etirant un petit vosinage de $\Gamma$ dans chaque $\Omega_{n}$ on obtient une suite de surfaces $\Omega'_{n}$ dont le bord est connexe. Il est facile \`a v\'erifier que la suite $\Omega'_{n}$ forme une filtration propre de $L$.  

\medskip
Pour completer la d\'emostration du th\'eor\`eme, il suffit de realiser la construction ci-dessus feuille par feuille de fa\c{c}on mesurable, en rempla\c{c}ant la suite de surfaces $\Omega_{n}$ par la trace sur les feuilles d'une filtration $\phi$-compacte $\{B_{n}\}_{n\in \N}$. Pour obtenir une filtration $\phi$-compacte propre, il suffit de garantir que la r\'eunion des $\Gamma$ est un bor\'elien de $X$. Ceci est facile \`a garantir, car il repose sur le choix de la sous-vari\'et\'e $\widehat\Gamma_{n}$ pour chaque plaque du bor\'elien $B_{n}$. Puisque le choix porte sur les sous-vari\'et\'es simpliciales de la plaque en question, il n'y a qu'un nombre fini de choix possibles. 

\subsection{Preuve du th\'eor\`eme B}
Ce th\'eor\`eme est en fait une cons\'equence imm\'ediate des r\'esultats de Rudolph \cite{Ru}, puis de Gilbert Hector et moi m\^eme \cite{BH}. Dans ce dernier papier, on prouve que tout feuilletage par plans est d\'efini par une action ergodique essentiellement libre de $\C$ de telle sorte que l'action de $\Z\times \Z$ induite pr\'eserve une transversale choisie \`a l'avance. Dans \cite {Ru}, Rudolph prouve que deux telles actions produisent des feuilletages isomorphes. Plus pr\'ecis\'ement, Rudolph trouve un isomorphisme isotope \`a l'identit\'e. Par cons\'equent \'etant donn\'e un feuilletage moyennable par plans, et deux transversales $T$ et $S$ de m\^eme mesure, il poss\`ede un automorphisme isotope \`a l'identi\'e qui envoit $T$ sur $S$. Un tel automorphisme determine de fa\c{c}on \'evidente un isomorphisme entre les feuilletages obtenus par greffe d'anses sur $T$ et $S$.

\section{Feuilletages \`a deux bouts}
Un feuilletage mesur\'e ergodique $(X,\mu)$ dont la feuille g\'en\'erique a deux bouts est automatiquement moyennable. On peut dire encore plus: d'apr\`es le th\'eor\`eme C de \cite{Gh1}, il se projette sur un feuilletage mesur\'e de dimension un $(Y,\nu)$ par une application bor\'elienne continue $\pi:X\to Y$ \`a fibres compactes qui envoit les deux bouts de $X$ sur les deux bouts de $Y$ et telle que $\pi_{*}\mu=\nu$. Nous dirons que $Y$ est un {\em ecrassement} de $X$.

\medskip
\begin{prop}
Tout \'ecrassement $\pi: (X,\mu)\to (Y,\nu)$ d'un feuilletage \`a deux bouts poss\`ede une section.
\end{prop}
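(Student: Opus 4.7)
My strategy relies on a measurable selection argument together with a horizontal lift along the leaves of $Y$, using the structural information provided by the two-ended hypothesis.

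The key observation is the following: the generic leaf $L_Y$ of $Y$ has two ends and is therefore homeomorphic to $\R$. Each fiber $\pi^{-1}(y)$ is a compact $1$-submanifold of the corresponding leaf $L_X$ of $X$, and it separates the two ends of $L_X$; generically this forces it to be a single simple closed curve. By Ehresmann's theorem, the restriction $\pi|_{L_X}\colon L_X \to L_Y$ is then a smooth locally trivial $S^1$-bundle over $L_Y \simeq \R$, hence globally trivial: $L_X \simeq S^1 \times \R$. In particular each leaf of $X$ admits infinitely many smooth sections of $\pi|_{L_X}$; the work is to choose them coherently in a Borel way.

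I build the section in two stages. First, choose a Borel transversal $\tau \subset Y$ of finite positive measure meeting every leaf (available by the {\bf (MF)} hypothesis). The multifunction $y \mapsto \pi^{-1}(y)$ has compact values and Borel graph, so the Kuratowski--Ryll-Nardzewski selection theorem provides a Borel map $s_0\colon \tau \to X$ with $\pi \circ s_0 = \op{id}_\tau$. Second, equip $X$ with a measurable leafwise Riemannian metric (legitimate by the smoothness assumption on leaves) and let $H$ be the line field orthogonal, inside each leaf of $X$, to the tangent of the fibers of $\pi$; integrating $H$ in each $L_X$ yields smooth local sections of $\pi|_{L_X}$. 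Using the measurable orientation of $Y$ given by the distinction between its two ends, define for $y \in Y$ the point $y_0(y) \in \tau$ immediately to the left of $y$ in its leaf of $Y$, and set $s(y)$ to be the horizontal lift of $s_0(y_0(y))$ to the fiber over $y$ along the arc $[y_0(y),y] \subset L_Y$.

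The main obstacle is to make $s$ genuinely continuous along each leaf of $Y$ rather than only piecewise continuous between consecutive crossings of $\tau$. At such a crossing, the horizontal lift from the previous point of $\tau$ may disagree with the value of $s_0$ at the next point; removing this discrepancy amounts to requiring $s_0$ to be equivariant under the horizontal transport of the return map $\phi$ of the $Y$-flow on $\tau$, i.e.\ to solving a cohomological equation in the circle bundle $\pi^{-1}(\tau)\to\tau$. The simple connectedness of each $L_Y \simeq \R$ guarantees that the constraint is always leafwise solvable, so a Borel global solution can be assembled by a Rokhlin tower argument applied to the Ambrose--Kakutani representation of $Y$ as a special flow over a base automorphism. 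This cohomological step is where the real content of the proof lies.
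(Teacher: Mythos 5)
There is a genuine gap, and it lies in the structural claim on which your whole lifting scheme rests. The proposition concerns an arbitrary two-ended ergodic foliation by surfaces, and in the main case of interest (caract\'eristique d'Euler n\'egative, les feuilletages du type $\big(\Phi(\R)\times\R/\Z\big)^\#_T$) the generic leaf $L_X$ has handles: it is \emph{not} hom\'eomorphe \`a $\mathbb S^1\times\R$, the fibre $\pi^{-1}(y)$ need not be a single simple closed curve (it separates the two ends, but it may consist of several circles), and $\pi$ is only an application bor\'elienne continue \`a fibres compactes, not a leafwise submersion --- indeed, when the leaf has genus, any proper map onto $\R$ must have critical points, so Ehresmann does not apply and the orthogonal line field $H$ has no horizontal lift through the critical levels. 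Your construction of $s$ by horizontal transport therefore only makes sense in the cylinder case, which is the corollary, not the proposition. The paper's argument avoids this entirely: one builds a section of $\pi$ over a transversale $S\subset Y$ de mesure positive (your Kuratowski--Ryll-Nardzewski step is the same first move), notes that by ergodicity $S$ d\'ecoupe $Y$ en segments compacts, and then extends the section continuously over each compact segment with the prescribed endpoint values --- a purely leafwise extension problem for which there is no obstruction, the actual content being that this extension can be carried out measurably, which is where the techniques of \cite{Be1} are invoked.

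Even granting the circle-bundle picture, your final ``cohomological equation'' step misplaces the difficulty. You do not need $s_0$ to be equivariant under the holonomy transport of the return map: over each compact segment between consecutive crossings of $\tau$ the bundle is trivial with connected fibre, so any two prescribed values over the endpoints can be joined by a continuous section of that segment (interpolate in the fibre coordinate as the base parameter moves); the Rokhlin tower / Ambrose--Kakutani machinery is unnecessary. What must be controlled instead is the Borel dependence of these leafwise extensions on the transverse parameter, and that is the point your proposal leaves unaddressed.
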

\begin{proof}
La construction d'une telle section repose sur des techniques developp\'ees en d\'etail dans \cite{Be1}. L'id\'ee de la preuve est la suivante. Etant donn\'ee une transversale de mesure positive $S$ de $Y$, on peut facilement construire une section de $\pi$ sur $S$. Mais par ergodicit\'e on sait que $S$ d\'ecoupe $Y$ en des segments compacts et il n'y a pas d'obstruction pour prolonger contin\^ument la section sur chaque segment. On montre dans \cite{Be1} que cette extension peut \^etre faite de mani\`ere mesurable.
\end{proof}

Les m\^emes thecniques permettent de d\'emontrer qu'un feuilletage mesur\'e ergodique de dimension un $(Y,\nu)$ poss\`ede toujours une orientation. Le choix d'une telle orientation induit sur toute transversale de mesure positive $T$ une transformation $\gamma_{T}:T\to T$ qui envoit un point $x\in T$ sur le premier point de $\gamma_{T}(x)\in T$ obtenu en se promenant sur la feuille qui contient $x$ dans la direction dict\'ee par l'orientation. On appelle $\gamma_{T}$ l'application de premier retour sur $T$

\medskip
Consid\'erons un \'ecrassement $\pi:(X,\mu)\to (Y,\nu)$. Etant donn\'ee une transversale $S$ de $Y$, on peut supposer, quitte \`a d\'eformer leg\`erement l'application $\pi$, que $S$ est un ensemble de valeurs r\'eguliers de $\pi$. Dans ce cas $\pi^{-1}(S)$ est un feuilletage dont les feuilles sont des hypersurfaces compactes de $X$. Si $X$ est de dimension un, $T=\pi^{-1}(S)$ est en fait une transversale de $X$. De plus une orientation de $Y$ ordonne les deux bouts des feuilles de $Y$, et donc des feuilles de $X$. On consid\`ere 
$$
\gamma:T\to T
$$
l'application de premier retour sur $T$, puis on note $x'=\gamma(x)$. Par continuit\'e de $\pi$ on a pour tout $x\in T$  
$$
\pi(x)\leq \pi(x')
$$
our l'ordre induit sur les feuilles de $Y$ par leur orientation. Quitte \`a d\'eformer leg\`erement $\pi$ on peut supposer qu'on a $\pi(x)<\pi(x')$ pour tout $x\in T$. On construit alors une application bor\'elienne continue
$$
\pi':X\to Y
$$
qui envoit hom\'eomorphiquement le segment $[x,x']$ sur le segment $[\pi(x),\pi(x')]$. L'application ainsi construite est clairement homotope \`a $\pi$ et bijective puisque continue strictement croissante et surjective. Nous avons ainsi d\'emontr\'e le r\'esultat suivant :

\begin{prop}
Soit $(Y,\nu)$ et $(Y',\nu')$ deux feuilletages ergodiques de dimension un et soit $\pi:(Y,\nu)\to (Y',\nu')$ un \'ecrassement de $(Y,\nu)$. Alors $\pi$ est homotope \`a un isomorphisme de feuilletages mesur\'es.
\end{prop}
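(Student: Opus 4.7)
The plan is to formalize the construction sketched in the paragraphs immediately preceding the statement. First I would fix a transversale $S \subset Y'$ of positive finite measure, and, after a small homotopy, replace $\pi$ by a map (still denoted $\pi$) for which $S$ consists of regular values; then $T := \pi^{-1}(S)$ is a transversale of $Y$ with $\nu(T) = \nu'(S)$.

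Next, I would use the canonical orientation of ergodic one-dimensional measured foliations (orientation that the text has already noted must exist). This yields first-return maps $\gamma : T \to T$ and $\gamma' : S \to S$. Since $\pi$ is end-preserving and continuous along leaves, for every $x \in T$ the first-return image $\gamma(x)$ satisfies $\pi(x) \leq \pi(\gamma(x))$ in the induced order on the leaf of $Y'$. After a further small perturbation I may assume the strict inequality $\pi(x) < \pi(\gamma(x))$ and also $\pi(\gamma(x)) = \gamma'(\pi(x))$, i.e.\ that $\pi|_T$ conjugates $\gamma$ to $\gamma'$ (one may always arrange this by modifying $\pi$ in the interior of each arc $[x,\gamma(x)]$, without changing it at the endpoints).

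Then, for every $x \in T$, the compact leaf segment $[x,\gamma(x)] \subset Y$ maps under $\pi$ into the compact leaf segment $[\pi(x),\gamma'(\pi(x))] \subset Y'$. I would define the new map $\pi' : Y \to Y'$ by sending $[x,\gamma(x)]$ homeomorphically onto $[\pi(x),\gamma'(\pi(x))]$, for instance by affine reparametrization with respect to a Borel choice of smooth structure along the leaves. The arcs $\{[x,\gamma(x)] : x \in T\}$ tile $Y$ up to a null set, so $\pi'$ is globally well defined, continuous along each leaf, Borel, and a bijection; it preserves the transverse measure since $\pi$ already did and the modification is only along leaves. Homotopy from $\pi$ to $\pi'$ is obtained by straight-line interpolation along leaves of $Y'$ inside each arc $[\pi(x),\gamma'(\pi(x))]$.

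The main obstacle is of course measurability: the initial transversale, the perturbation yielding the strict inequality $\pi(x)<\pi(\gamma(x))$, and the reparametrization of each segment must all be carried out in a Borel-measurable way in $x \in T$. These are precisely the selection problems treated in detail in \cite{Be1} for the construction of measurable sections along the arcs of a one-dimensional foliation, and the same techniques apply here essentially verbatim. Once that is granted, the verification that $\pi'$ is a BT isomorphism homotopic to $\pi$ is routine.
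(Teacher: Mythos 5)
Your construction is exactly the one the paper uses: pull back a transversale of regular values, use the orientation and a small perturbation to get $\pi(x)<\pi(\gamma(x))$, then replace $\pi$ segment by segment by a homeomorphism $[x,\gamma(x)]\to[\pi(x),\gamma'(\pi(x))]$, with the measurability of the choices handled by the techniques of \cite{Be1}. The only cosmetic difference is that you "arrange" the conjugation $\pi(\gamma(x))=\gamma'(\pi(x))$ by a further perturbation, whereas it is automatic once $T=\pi^{-1}(S)$ and the strict inequality hold, since the image of the open arc avoids $S$; this does not change the argument.
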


\medskip
Les deux propositions pr\'ec\'edentes permettent de d\'emontrer le th\'eor\`eme suivant:

\newtheorem*{thmC}{Th\'eor\`eme C}
\begin{thmC}{\em 
Soit $(X,\mu)$ un feuilletage mesur\'e ergodique dont la feuille g\'en\'erique a deux bouts et soient
$$
\pi_{0}:(X,\mu)\to (Y_{0},\nu_{0})\quad,\quad \pi_{2}:(X,\mu)\to (Y_{1},\nu_{1})
$$
deux \'ecrassements de $(X,\mu)$. Alors les feuilletages $(Y_{0},\nu_{0})$ et $(Y_{1},\nu_{1})$ sont isomorphes.
}\end{thmC}

En effet, \'etant donn\'ee une sectio $s_{0}:Y_{0}\to X$ de $\pi_{0}$, l'application 
$$
\pi_{1}\circ s_{0}:Y_{0}\to Y_{1}
$$
est un \'ecrassement de $Y_{0}$, donc homotope \`a un isomorphisme. En particulier, le type d'isomorphisme d'un feuilletage mesur\'e \`a deux bouts est un invariant du feuilletage.

\medskip
Dans le cas des feuilletages par cylindres, on obtient un r\'esultat plus fort:
\begin{cor}
Soit $(X,\mu)$ un feuilletage mesur\'e ergodique par cylindres et $(Y,\nu)$ son \'ecrassement. Alors on a
$$
(X,\mu)\simeq (Y,\nu)\times \mathbb S^{1}
$$
o\`u $\mathbb S^{1}$ est un cercle.
\end{cor}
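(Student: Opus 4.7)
Le plan consiste \`a construire une trivialisation explicite de la projection $\pi: X \to Y$ au moyen d'une section. La premi\`ere observation cruciale est que les fibres de $\pi$ sont des cercles: chaque fibre $\pi^{-1}(y)$ est une sous-vari\'et\'e compacte connexe de codimension un d'une feuille cylindrique $L$ de $X$, et puisque $\pi$ envoit bijectivement les deux bouts de $L$ sur les deux bouts de la droite $\pi(L)$, chaque fibre s\'epare les deux bouts de $L$. Une telle hypersurface d'un cylindre est n\'ecessairement hom\'eomorphe \`a $\mathbb S^{1}$.

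Ensuite, j'appliquerais la premi\`ere proposition de cette section pour obtenir une section bor\'elienne $s: Y \to X$ de $\pi$, munissant chaque fibre d'un point base $s(y)$. En utilisant la structure diff\'erentiable le long des feuilles (h\'erit\'ee de la triangulation $\K$), je munirais chaque cercle $\pi^{-1}(y)$ d'une m\'etrique riemannienne mesurable, notant $\ell(y)$ sa longueur totale. Apr\`es choix d'une orientation transverse mesurable des fibres (possible par orientabilit\'e de $X$), on param\'etrise $\pi^{-1}(y) \cong \mathbb S^{1} = \R/\Z$ par longueur d'arc normalis\'ee \`a partir de $s(y)$. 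Ceci d\'efinit
$$
\Phi : (Y,\nu) \times \mathbb S^{1} \To (X,\mu),\quad \Phi(y,\theta) = \sigma_{y}(\theta),
$$
o\`u $\sigma_{y}$ est la param\'etrisation d\'ecrite de $\pi^{-1}(y)$.

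Il resterait \`a v\'erifier que $\Phi$ est un isomorphisme de feuilletages mesur\'es. La bijectivit\'e et la continuit\'e le long des feuilles sont imm\'ediates, les feuilles cylindriques $\pi^{-1}(\ell)$ de $X$ \'etant trivialement fibr\'ees au-dessus des droites $\ell$ de $Y$. La bor\'elianit\'e de $\Phi$ d\'ecoulera de celles de $s$, de $\ell(\cdot)$, et du choix mesurable d'orientation. Pour la compatibilit\'e avec la mesure transverse, il suffit d'observer qu'\`a toute transversale de $Y \times \mathbb S^{1}$ de la forme $T \times \{*\}$ correspond la transversale $s(T) \subset X$, de mesure $\nu(T)$ puisque $\pi \circ s = \op{id}$ et que $\pi$ pr\'eserve la mesure transverse par d\'efinition de l'\'ecrassement.

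Le principal obstacle sera le choix mesurable et globalement coh\'erent de la param\'etrisation par longueur d'arc --- en particulier, l'orientation transverse des fibres circulaires --- puisque cette construction doit se faire simultan\'ement sur toutes les feuilles. Ce point se r\'esoudra par les m\^emes techniques bor\'eliennes employ\'ees pour construire la section, d\'ecrites dans \cite{Be1}, combin\'ees avec la triangulation feuille par feuille de $X$.
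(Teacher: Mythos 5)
Votre strat\'egie d'ensemble --- section de l'\'ecrasement fournie par la premi\`ere proposition, puis param\'etrisation des fibres \`a partir du point base --- est bien celle que le papier laisse implicite pour ce corollaire, mais elle repose sur une affirmation initiale non justifi\'ee, et fausse en g\'en\'eral. Le th\'eor\`eme C de \cite{Gh1} ne fournit qu'une application bor\'elienne continue \`a fibres compactes~: rien ne garantit que les fibres $\pi^{-1}(y)$ soient des sous-vari\'et\'es, encore moins des cercles. Le papier lui-m\^eme ne consid\`ere des fibres qui sont des hypersurfaces compactes qu'apr\`es avoir l\'eg\`erement d\'eform\'e $\pi$, et seulement au-dessus d'un ensemble de valeurs r\'eguli\`eres; et m\^eme en un niveau r\'egulier, la fibre est une r\'eunion finie de cercles qui n'a aucune raison d'\^etre connexe~: un niveau s\'eparant les deux bouts d'un cylindre peut contenir plusieurs cercles essentiels ainsi que des cercles homotopiquement triviaux. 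L'objet que vous param\'etrez par longueur d'arc, ``le'' cercle $\pi^{-1}(y)$, n'existe donc pas encore, et le vrai point dur n'est pas celui que vous signalez (mesurabilit\'e de l'orientation et de la longueur d'arc) mais le redressement pr\'ealable de $\pi$.

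Pour combler ce trou, il faut remplacer $\pi$, \`a homotopie le long des feuilles pr\`es, par une v\'eritable fibration en cercles~: c'est l'analogue en dimension deux de la construction de $\pi'$ que le papier effectue en dimension un \`a l'aide des applications de premier retour. Concr\`etement, on choisit une transversale $S\subset Y$ form\'ee de valeurs r\'eguli\`eres, on s\'electionne mesurablement un cercle essentiel simplicial dans chaque bloc compact compris entre deux niveaux cons\'ecutifs de $\pi^{-1}(S)$ (le nombre de choix simpliciaux dans une plaque donn\'ee \'etant fini, le choix peut se faire bor\'eliennement, comme dans la preuve du th\'eor\`eme A); deux cercles essentiels cons\'ecutifs sont disjoints et cobordent un anneau compact, que l'on fibre en cercles pour obtenir une fibration homotope \`a $\pi$. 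Une fois ce redressement acquis, votre construction (section, orientation mesurable, param\'etrisation depuis $s(y)$, v\'erification sur les mesures) fonctionne; notez seulement qu'il faut contr\^oler $\mu(\Phi(T\times\{\theta\}))=\nu(T)$ pour tout $\theta$ et non pour le seul point base, ce qui suit de l'invariance de $\mu$ puisque les sections aux niveaux $\theta$ et $0$ sont des transversales \'equivalentes de $X$.
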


\subsection{Entropie d'un feuilletage \`a deux bouts.} 
On fini en donnant la d\'efinition d'entropie d'un feuilletage \`a deux bouts, qui raffine la classification donn\'ee plus haut en fonction de la caract\'eristique d'Euler feuillet\'ee.

\medskip
Soit $(Y,\nu)$ un feuilletage ergodique de dimension un muni d'une orientation. On d\'efinit l'entropie d'une transversale $T$ comme \'etant
$$
h(T)=h(\gamma_{T})\mu(T)
$$
o\`u $h(\gamma)$ est l'entropie (au sens de Kolmogorov-Sinai \cite{Kol}) de l'application de premier retour $\gamma_{T}$ agissant sur l'espace bor\'elien $T$ muni de la mesure de probabilit\'e $\hat\mu=\mu(T)^{-1}\nu$. Puisque $(Y,\nu)$ est suppos\'e ergodique, les transformations $\gamma_{T}$, o\`u $T$ parcourt les transversales de mesure positive de $(Y,\nu)$, sont \'equivalentes au sens de Kakutani. En effet, \'etant donn\'ees deux transversales de mesure non nulle $T$ et $S$, il existe des transversales disjointes de mesure non nulle $T'\subset T$ et $S'\subset S$ telles que l'application de premier retour de $S'\cup T'$  envoit $S'$ sur $T'$ et $T'$ sur $S'$. En particulier on a 
$$
\gamma_{S'\cup T'}^{2}=\gamma_{S'}\cup\gamma_{T'}
$$
ce qui implique que les applications de premier retour $\gamma_{S'}$ et sur $\gamma_{T'}$ sont conjugu\'ees. En particulier $h(\gamma_{S'})=h(\gamma_{T'})$. Mais d'apr\`es la formule d'Abramov \cite{Abr} sur l'entropie des transformations induites on a 
$$
h(\gamma_{T})\mu(T)=h(\gamma_{T'})\mu(T')=h(\gamma_{S'})\mu(S')=h(\gamma_{S})\mu(S).
$$
Ceci montre que l'entropie d'une transversale est ind\'ependante de la transversale chosie. Il s'agit donc d'un invariant du feuilletage orient\'e.

\begin{defn}
On d\'efinit l'{\em entropie} d'un feuilletage ergodique orient\'e de dimension un par
$$
h(Y,\nu)=h(\gamma_{T})\mu(T)
$$
o\`u $T$ est une transversale quelconque de mesure non nulle.
\end{defn}

D'apr\`es le th\'eor\`eme C on peut d\'efinir l'entropie d'un feuilletage mesur\'e ergodique \`a deux bouts comme \'etant celle de son \'ecrassement. Ce nombre d\'epend bien entendu de l'orientation choisie, mais fournit une classification fine des feuilletages ergodiques \`a deux bouts.

%
%

\end{document}